\newtheorem{theorem}{Theorem}[section]
\newtheorem{lemma}[theorem]{Lemma}
\theoremstyle{definition}
\theoremstyle{remark}
\newtheorem*{remark}{Remark}
\theoremstyle{definition}
\newcommand{\F}{\mathbb{F}}
\title{A lower bound on the number of bent squares}
\author{Jan Kristian Haugland \\ \texttt{\small{admin@neutreeko.net}}}
\begin{document}

\maketitle

\section{Introduction}
Bent functions are Boolean functions that are maximally nonlinear, and have been studied for decades. One out of several open problems is to find best possible estimates for $b_n$, the number of bent functions in $n$ variables where $n$ is a nonnegative even integer. Potapov \textit{et al.} \cite{potapov} noted that the best definitive lower bounds implied by earlier work were on the form
\begin{equation}\label{lowerbound}
\operatorname{log}_2 b_n \geq c n \cdot 2^{\frac{n}{2}} (1+o(1))
\end{equation}
with $c=\frac{1}{2}$, and showed that a construction by Baksova and Tarannikov \cite{baksova} yields $c=\frac{3}{4}$. The currently best \textit{upper} bound is $$\operatorname{log}_2 b_n \leq \frac{3}{8} \cdot 2^n (1+o(1))$$ by Potapov \cite{potapov2}.

Agievich \cite{agievich} introduced the representation of a bent function as a bent square, that is, a square matrix for which each row and each column is the Walsh spectrum of some Boolean function (confer Section 3). In Section 4, we apply this representation in order to prove that \eqref{lowerbound} holds with $c=1$. The result is not based on an explicit construction. Rather, we consider a set of potential building blocks, and show that a sufficiently large number of combinations must yield valid bent squares.

\section{Preliminaries}
A Boolean function in $n$ variables is a function $f:\F_2^n \to \F_2$. If each monomial in the representation of $f$ as a polynomial in $n$ variables over $\F_2$ is of degree 0 or 1, then $f$ is affine. Two Boolean functions $f$, $g$ in $n$ variables are extended affine equivalent, or EA-equivalent for short, if there exists an affine permutation $\pi:\F_2^n \to \F_2^n$ and an affine Boolean function $h$ in $n$ variables such that $g(x) = (f \circ \pi)(x) \oplus h(x)$.

A Boolean function $f$ in $n$ variables is called bent if the Walsh transform $$W_f(y) = \sum_{x \in \F_2^n}(-1)^{f(x) \oplus \langle x, y \rangle}$$ takes only the values $\pm 2^{\frac{n}{2}}$. Since $2^{\frac{n}{2}}$ must be an integer, bent functions in $n$ variables exist only for even $n$.

Let $H_n$ denote the $n$th Hadamard matrix of order $2^n$, defined iteratively by $H_0 = (1)$ and $$H_k=\left( \begin{matrix} H_{k-1} & H_{k-1} \\ H_{k-1} & -H_{k-1} \end{matrix} \right).$$ Notice that $H_n^2 = 2^n I$. With $[(-1)^{f(x)}]$ denoting a row vector with the values of $(-1)^{f(x)}$ as entries for all $x\in \F_2^n$ in lexicographical order, the Walsh spectrum of $f$ is given by $[(-1)^{f(x)}] H_n$. Thus, $f$ is bent if and only if each entry of its Walsh spectrum is $\pm 2^{\frac{n}{2}}$.

\section{Bent squares}
Let $f$ be a bent function in $n$ variables, and let $\mathcal{A}_f$ be the $2^{\frac{n}{2}} \times 2^{\frac{n}{2}}$ matrix for which the first row consists of the first $2^{\frac{n}{2}}$ entries of $[(-1)^{f(x)}]$, the second row consists of the next $2^{\frac{n}{2}}$ entries, and so on. The \textit{bent square} corresponding to $f$ is then given by $\mathcal{B}_f = \mathcal{A}_f H_{\frac{n}{2}}$.

Clearly, $2^{-\frac{n}{2}} \mathcal{B}_f H_{\frac{n}{2}}$ contains the entries of $[(-1)^{f(x)}]$, which are all $\pm 1$. Furthermore, $2^{-\frac{n}{2}} \mathcal{B}_f^\intercal H_{\frac{n}{2}}$ contains the entries of $2^{-\frac{n}{2}} [(-1)^{f(x)}] H_n$, which are also all $\pm 1$. It follows that each row and the transpose of each column of a bent square is the Walsh spectrum of some Boolean function in $\frac{n}{2}$ variables, and that any matrix with this property is a bent square corresponding to some bent function in $n$ variables.

As observed in \cite{agievich}, two types of admissible vectors for the rows and columns of a bent square are of particular interest due to their high sparsity.

\begin{itemize}
\item Type 1: The Walsh spectrum of an affine Boolean function in $\frac{n}{2}$ variables contains a single non-zero entry $\pm 2^{\frac{n}{2}}$.
\item Type 2: If $n \geq 4$ and we have a Boolean function in $\frac{n}{2}$ variables that is EA-equivalent to a single monomial of degree 2, then its Walsh spectrum contains four non-zero entries $\pm 2^{\frac{n}{2}-1}$. The indices of the non-zero entries form a 2-dimensional affine subspace of $\F_2^{\frac{n}{2}}$, and the number of positive entries is odd.
\end{itemize}
The number of $2^{\frac{n}{2}} \times 2^{\frac{n}{2}}$ bent squares whose rows and columns consist of vectors of type 1 is easily seen to be $2^{2^{\frac{n}{2}}}\left(2^{\frac{n}{2}}\right)!$, which implies \eqref{lowerbound} with $c=\frac{1}{2}$. We deviate from \cite{agievich} in the estimate of the number of $2^{\frac{n}{2}} \times 2^{\frac{n}{2}}$ bent squares whose rows and columns consist of vectors of type 2.

\section{A lower bound on $b_n$}
Suppose $n$ is an even integer $\geq 4$, and let $M_n$ denote the set of binary $2^{\frac{n}{2}-1} \times 2^{\frac{n}{2}-1}$ matrices for which exactly two entries in each row and each column are non-zero. Each element of $M_n$ has its rows and columns indexed by the elements of $\F_2^{\frac{n}{2}-1}$ in lexicographical order, and is assigned a vertical \textit{signature}, that is, the sequence of the XOR sums of the column indices of the non-zero elements of the rows in order. Similarly, the horizontal signature consists of the XOR sums of the row indices of the non-zero elements of the columns. An example is given in Table~\ref{matrix_example}. The non-zero elements are marked as bullet points. For instance, the column indices of the non-zero elements in the top row are 001 and 101, yielding the vertical signature value $001 \oplus 101 = 100$.

\begin{table}
\begin{tabular}{|c|c|c|c|c|c|c|c|c|c|}\hline
Index & 000 & 001 & 010 & 011 & 100 & 101 & 110 & 111 & V. sig. \\ \hline
000 & & \Large \textbullet & & & & \Large \textbullet & & & 100 \\ \hline
001 & & & \Large \textbullet & & & & \Large \textbullet & & 100 \\ \hline
010 & \Large \textbullet & & & \Large \textbullet & & & & & 011 \\ \hline
011 & & & & \Large \textbullet & & & \Large \textbullet & & 101 \\ \hline
100 & \Large \textbullet & & & & \Large \textbullet & & & & 100 \\ \hline
101 & & \Large \textbullet & & & & & & \Large \textbullet & 110 \\ \hline
110 & & & \Large \textbullet & & & \Large \textbullet & & & 111 \\ \hline
111 & & & & & \Large \textbullet & & & \Large \textbullet & 011 \\ \hline
H. sig. & 110 & 101 & 111 & 001 & 011 & 110 & 010 & 010 & \\ \hline
\end{tabular}
\caption{Example of a matrix in $M_8$, with its indices and signatures.}
\label{matrix_example}
\end{table}

The following result enables us to estimate the number of pairs of elements in $M_n$ with matching signatures.

\begin{lemma}\label{cauchy}
If $X$ and $Y$ are finite sets and $F:X \to Y$, then there are at least $\frac{|X|^2}{|Y|}$ ordered pairs $(x_1, x_2) \in X \times X$ such that $F(x_1)=F(x_2)$.
\end{lemma}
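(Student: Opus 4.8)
The plan is to apply the Cauchy--Schwarz inequality (equivalently, convexity of $t \mapsto t^2$) to the fiber sizes of $F$. First I would partition $X$ into the fibers $F^{-1}(y)$ for $y \in Y$, and let $a_y = |F^{-1}(y)|$. The number of ordered pairs $(x_1,x_2)$ with $F(x_1)=F(x_2)$ is exactly $\sum_{y \in Y} a_y^2$, since each such pair lies in a unique fiber and every pair within a common fiber is counted.

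Next I would bound this sum from below. We have $\sum_{y} a_y = |X|$, the sum ranging over the (at most) $|Y|$ values $y \in Y$. By the Cauchy--Schwarz inequality applied to the vectors $(a_y)_y$ and $(1)_y$, or equivalently by the power-mean inequality, $\left(\sum_y a_y\right)^2 \le |Y| \sum_y a_y^2$, so $\sum_y a_y^2 \ge |X|^2 / |Y|$. This is precisely the claimed bound.

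There is essentially no obstacle here; the only thing to be slightly careful about is that the sum defining the pair count should include the diagonal pairs $(x,x)$ (which it does, as these satisfy $F(x)=F(x)$ trivially), so that the combinatorial identity $\#\{(x_1,x_2): F(x_1)=F(x_2)\} = \sum_y a_y^2$ holds on the nose. One might also note that strictly one only needs $|Y| \ge$ the number of nonempty fibers, but using $|Y|$ itself only weakens the bound, which is harmless. I would write this up in two or three sentences, since it is a standard averaging argument and the paper will use it as a black box when counting pairs of matrices in $M_n$ with matching signatures.
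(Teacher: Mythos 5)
Your proposal is correct and follows essentially the same argument as the paper: count the ordered pairs as $\sum_{y} |F^{-1}(y)|^2$ and bound this below by $|X|^2/|Y|$ via Cauchy--Schwarz applied against the all-ones vector. No gaps; the remark about diagonal pairs and empty fibers matches the paper's (implicit) treatment.
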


\begin{proof}
For $y \in Y$, let $u(y) = |F^{-1}(y)|$. Then there are $\sum_{y \in Y} u(y)^2$ ordered pairs $(x_1, x_2) \in X \times X$ such that $F(x_1) = F(x_2)$. Let $v(y) = 1$ for all $y \in Y$. By the Cauchy-Schwarz inequality $|\langle u, v \rangle | \leq ||u|| \cdot ||v||$, it follows that $$\sum_{y \in Y} u(y)^2 \geq \frac{\left(\sum_{y \in Y} u(y) \right)^2}{\sum_{y \in Y} v(y)^2} = \frac{|X|^2}{|Y|}.$$
\end{proof}

\begin{lemma}\label{signs}
For each element in $M_n$, there is at least one partition of the non-zero entries into two subsets such that each row and each column contains one entry from each subset.
\end{lemma}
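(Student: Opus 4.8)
The plan is to recast the statement as an edge‑$2$‑coloring problem on a bipartite graph. Form the bipartite graph $G$ whose vertex set is the disjoint union of a set $R$ of row vertices and a set $C$ of column vertices, both indexed by $\F_2^{\frac{n}{2}-1}$, and put one edge between the vertex for row $i$ and the vertex for column $j$ for each non‑zero entry of the matrix at position $(i,j)$. Distinct non‑zero entries give distinct edges, since a cell holds at most one entry, and because every matrix in $M_n$ has exactly two non‑zero entries in each row and in each column, $G$ is $2$-regular. A partition of the non‑zero entries with the required property is exactly a $2$-coloring of the edge set of $G$ in which the two edges at every vertex receive different colors.

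Next I would invoke the standard fact that a finite $2$-regular graph is a vertex‑disjoint union of cycles. Since every edge of $G$ joins $R$ to $C$, the graph is bipartite and hence contains no odd cycle; equivalently, walking around any cycle one alternates between row vertices and column vertices, which forces the cycle to have even length. On each such cycle I would $2$-color the edges by traversing the cycle and alternating between the two colors, which closes up consistently precisely because the length is even.

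Carrying this out on every cycle of $G$ yields a partition of the edge set, i.e. of the set of non‑zero entries, into two classes. Every vertex has degree $2$, lies on exactly one cycle, and is incident to two consecutive edges of that cycle, which under the alternating coloring receive opposite colors. Translating back to the matrix, each row index and each column index meets exactly one non‑zero entry from each of the two classes, which is the asserted partition.

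There is no genuine obstacle here; the content is in choosing the right model. The only points needing care are verifying that $G$ is truly $2$-regular and loopless and that the correspondence between non‑zero entries and edges is a bijection, so that the cycle decomposition and the alternating $2$-edge-coloring are legitimately available.
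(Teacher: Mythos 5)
Your proof is correct and is essentially the paper's argument in dual form: the paper takes the non-zero entries as vertices (joined when they share a row or column) and 2-colors the resulting disjoint even cycles, while you take rows and columns as vertices with entries as edges and alternately 2-color the edges of the same even cycles. The combinatorial content — 2-regularity gives a disjoint cycle decomposition, row/column alternation forces even length, and evenness makes the alternating 2-coloring close up — is identical.
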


\begin{proof}
Consider the graph with the non-zero entries as vertices and the unordered pairs of non-zero entries in the same row or the same column as edges. This is a finite 2-regular graph, and thus consists of disjoint cycles. Each cycle is of even length, as the edges are alternatingly horizontal and vertical. The graph is therefore bipartite.
\end{proof}

\begin{theorem}
If $n$ is an even integer $\geq 4$, then
\begin{equation}\label{mainresult}
\operatorname{log}_2 b_n \geq n \cdot2^{\frac{n}{2}} \left(1 + O\left(\frac{1}{n}\right) \right).
\end{equation}
\end{theorem}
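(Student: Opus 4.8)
The plan is to count bent squares whose rows and columns are all of type~2, since each such bent square corresponds to a distinct bent function and we may read off a lower bound on $b_n$ from the count. A type~2 vector in $\F_2^{n/2}$ is supported on a $2$-dimensional affine subspace, carries one of $2^{n/2-2}$ choices of the $2$-dimensional direction (a plane through the origin), one of $4$ translates within the quotient, and an odd number of $+$ signs among its four nonzero entries, i.e.\ $2^3=8$ sign patterns — but the combinatorial heart is which four positions are occupied, not the signs. I would first reduce the problem: fix the direction of the $2$-dimensional support subspace to be the same for all rows, say spanned by the last two coordinates, so that the $2^{n/2}\times 2^{n/2}$ bent square decomposes into a $2^{n/2-1}\times 2^{n/2-1}$ array of $2\times 2$ blocks, and within each occupied block exactly the right positions are filled. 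After this reduction, the position data of the bent square is governed precisely by a pair of matrices in $M_n$: one recording which blocks are occupied in the ``row direction'' and one in the ``column direction,'' and the compatibility condition for the two to assemble into a genuine bent square is exactly that their signatures match up (the vertical signature of one equals the horizontal signature of the other, entrywise). This is where Lemma~\ref{signs} enters, guaranteeing that once the positions are fixed there is always a valid two-colouring, hence a valid assignment of the $\pm 2^{n/2-1}$ magnitudes with the correct parity of positive entries in each line.

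With that structural picture in place, the count proceeds as follows. Apply Lemma~\ref{cauchy} with $X = M_n$ and $F$ the map sending a matrix to its (vertical) signature, a vector in $(\F_2^{n/2-1})^{2^{n/2-1}}$, so that $|Y| = 2^{(n/2-1)2^{n/2-1}}$; this yields at least $|M_n|^2 / |Y|$ signature-matched ordered pairs. Each such pair (together with Lemma~\ref{signs} and the sign freedom) produces a bent square of the desired form — here one must check that distinct pairs give distinct bent squares, which follows because the pair is recoverable from the support pattern of the bent square — and multiplying by the number of sign choices and the number of choices for the common support direction gives the bound on $b_n$. It remains to estimate $|M_n|$. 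A matrix in $M_n$ is equivalent to a pair of permutation-like structures: each row chooses an unordered pair of columns, subject to each column being chosen exactly twice; equivalently, $|M_n|$ is the number of $2$-regular bipartite (multi)graphs on $2^{n/2-1}+2^{n/2-1}$ vertices, which is of the form $\bigl((2^{n/2-1})!\bigr)^{2}$ up to a factor that is $2^{O(2^{n/2})}$ (this is a standard permanent/configuration-model estimate). Hence $\log_2 |M_n| = 2^{n/2-1}\cdot(n/2-1)\cdot 2 \cdot(1+o(1)) \sim \tfrac{n}{2}2^{n/2}$, and $\log_2\bigl(|M_n|^2/|Y|\bigr) = 2\log_2|M_n| - (n/2-1)2^{n/2-1} \sim n\cdot 2^{n/2} - \tfrac{n}{4}2^{n/2}\cdot$ — so one must be careful: I would choose the reduction so that the signature alphabet is as small as possible relative to $|M_n|^2$, and in fact taking the $F$ in Lemma~\ref{cauchy} to record only what is needed for compatibility (and absorbing the rest into the sign/direction multiplicities) is what pushes the constant up to $c=1$.

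The main obstacle, and the place I would spend the most care, is the bookkeeping that turns ``a signature-matched pair in $M_n$ plus sign data'' into ``a genuinely new bent function'' without double-counting and without losing a constant factor in the exponent. Concretely: (i) verifying that the two-block-direction description really does capture all type~2-everywhere bent squares with a fixed support direction, i.e.\ that the abstract matrix-with-Hadamard condition $\mathcal B = \mathcal A H_{n/2}$ is equivalent to the clean combinatorial picture; (ii) checking that the parity constraints on the number of positive entries per line (the ``number of positive entries is odd'' clause in the definition of type~2) are automatically satisfiable given Lemma~\ref{signs}, so that they cost only a bounded factor per line rather than an actual constraint; and (iii) getting the asymptotics of $|M_n|$ sharp enough — the factorial term must be $\bigl((2^{n/2-1})!\bigr)^2$ and not $\bigl((2^{n/2-1})!\bigr)^1$, which is exactly the improvement of Agievich's estimate that the paper advertises deviating from. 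Once (i)--(iii) are pinned down, the inequality~\eqref{mainresult} follows by Stirling applied to $\log_2(2^{n/2-1})! = 2^{n/2-1}(n/2-1) - O(2^{n/2})$ and collecting the $o(1)$ and $O(1/n)$ error terms.
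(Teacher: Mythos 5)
There is a genuine gap, and you in fact run into it yourself halfway through: with a single application of Lemma~\ref{cauchy} to ordered pairs in $M_n$, the best you can get is $\log_2\bigl(m_n^2/|Y|\bigr) \sim 2\cdot\tfrac12 n\,2^{n/2} - \tfrac14 n\,2^{n/2} = \tfrac34\, n\, 2^{n/2}$, i.e.\ exactly the previously known constant $c=\tfrac34$, not the claimed $c=1$. Your proposed repair --- ``choose the reduction so that the signature alphabet is as small as possible'' and ``absorb the rest into the sign/direction multiplicities'' --- cannot close this gap: the signature of an element of $M_n$ really does range over an alphabet whose logarithm is of order $\tfrac{n}{4}2^{n/2}$ (the paper's own bound $s_n \le (2^{n/2-1}-1)^{2^{n/2-1}-1}$ has the same order), while the sign freedom contributes at most $2^{O(2^{n/2})}$ and the choice of a common support direction at most $2^{O(n)}$; both are negligible against the missing $\tfrac{n}{4}2^{n/2}$ in the exponent. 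In addition, your structural reduction (all rows supported on translates of one fixed $2$-dimensional direction, with the position data encoded by one matrix of $M_n$ ``for rows'' and one ``for columns'') is never verified to produce, or to exhaust, bent squares of type 2; in the paper's construction the support directions vary from row to row, and the incidental counts you quote (``$2^{n/2-2}$ choices of direction'', ``$4$ translates'') are also off. But even granting the structural picture, the count caps at $c=\tfrac34$, so the theorem is not proved.

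The missing idea is to work with quadruples rather than pairs, applying Lemma~\ref{cauchy} twice. First, the vertical-signature map gives at least $m_n^2/s_n$ ordered pairs with equal vertical signature; second, viewing the combined horizontal signature of such a pair as a map into a set of size at most $s_n^2$, one gets at least $(m_n^2/s_n)^2/s_n^2 = m_n^4/s_n^4$ quadruples $(A,B,C,D)$ in which $A,B$ and $C,D$ are vertically matched while $A,C$ and $B,D$ are horizontally matched. Each such quadruple assembles into the $2^{n/2}\times 2^{n/2}$ block matrix with blocks $A,B$ on top and $C,D$ below, and the signature matching is precisely the condition that the four nonzero positions in every row and every column form a $2$-dimensional affine subspace of $\F_2^{n/2}$; Lemma~\ref{signs} then supplies at least $32$ admissible sign assignments with an odd number of positive entries in each line. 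Since $4\log_2 m_n - 4\log_2 s_n \ge 2n\cdot 2^{n/2} - n\cdot 2^{n/2} + O(2^{n/2}) = n\cdot 2^{n/2}\bigl(1+O(1/n)\bigr)$, this is what yields \eqref{mainresult}; the doubling from two matrices to four is exactly what lifts the constant from $\tfrac34$ to $1$, and it is absent from your argument.
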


\begin{proof}
Let $m_n$ denote the number of elements in $M_n$, and let $s_n$ denote the number of distinct vertical (or horizontal) signatures. By a result of Knuth \cite{knuth}, $$m_n \sim 2\sqrt{\pi} \left(\frac{2^{\frac{n}{2}-1}}{e}\right)^{2^{\frac{n}{2}}+\frac{1}{2}} \implies \operatorname{log}_2 m_n = \frac{1}{2}n \cdot 2^{\frac{n}{2}}\left(1 + O\left( \frac{1}{n} \right)\right).$$ Each of the first $2^{\frac{n}{2}-1}-1$ XOR sums in a vertical or horizontal signature can take $2^{\frac{n}{2}-1}-1$ different values, and the last entry is the XOR sum of all the previous ones due to the cycle structure mentioned in the proof of Lemma~\ref{signs}. Therefore, $$s_n \leq \left(2^{\frac{n}{2}-1}-1\right)^{2^{\frac{n}{2}-1}-1} \implies \operatorname{log}_2 s_n \leq \frac{1}{4}n \cdot 2^{\frac{n}{2}}\left(1 + O\left( \frac{1}{n} \right) \right).$$ By Lemma~\ref{cauchy}, there are at least $$\frac{m_n^2}{s_n}$$ ordered pairs $(A, B) \in M_n \times M_n$ for which $A$ and $B$ have the same vertical signature. Those pairs can have at most $s_n^2$ distinct combined horizontal signatures. By Lemma~\ref{cauchy} again, it follows that there are at least $$\frac{m_n^4}{s_n^4}$$ ordered pairs of pairs $((A, B), (C, D)) \in (M_n \times M_n) \times (M_n \times M_n)$ such that $A$ and $B$ have the same vertical signature, $C$ and $D$ have the same vertical signature, $A$ and $C$ have the same horizontal signature, and $B$ and $D$ have the same horizontal signature. For each such quadruple $(A, B, C, D)$, $$\left(\begin{matrix}A & B \\ C & D\end{matrix}\right)$$ is a distinct $2^{\frac{n}{2}} \times 2^{\frac{n}{2}}$ matrix such that the indices of the non-zero elements in each row and each column form a 2-dimensional affine subspace of $\F_2^{\frac{n}{2}}$.

For each of these matrices, there are at least 32 ways to replace the non-zero entries by $\pm 2^{\frac{n}{2}-1}$ so that we end up with an odd number of positive entries in each row and each column. For example, we can let the entries of $A$ be all positive or all negative, let $B$ and $C$ have one positive and one negative entry in each row and each column, and let the entries of $D$ be all positive or all negative. This accounts for at least 16 ways, by Lemma~\ref{signs}. In addition, we can let $A$ and $D$ have one positive and one negative entry in each row and each column, while the entries of $B$ and of $C$ are either all positive or all negative.

Thus, counting bent squares whose rows and columns consist of vectors of type 2 only, we have
\begin{equation}\label{specific}
b_n \geq 32 \frac{m_n^4}{s_n^4}
\end{equation}
which implies \eqref{mainresult}.
\end{proof}

\begin{remark}
The estimate \eqref{specific} is not intended to be best possible; there are several ways to improve it. However, we are not aware of any refinement of the argument that would do more than altering the implied constant in the term $O\left( \frac{1}{n} \right)$ in \eqref{mainresult}.
\end{remark}

\end{document}